\documentclass[12pt]{amsart}
\usepackage{amssymb,amsmath,epsfig,graphics,latexsym,psfrag}
\usepackage[english]{babel}
\usepackage[all]{xy}
\setlength{\topmargin}{1.7cm} \setlength{\oddsidemargin}{0.5cm}
\setlength{\evensidemargin}{0.5cm} \textheight = 20cm \textwidth =
15.2cm
\usepackage{amscd, amssymb, amsmath, amsthm, graphics}
\usepackage{amsmath,amsfonts,amsthm,amssymb}
\usepackage{latexsym,amsmath}
\usepackage{graphicx,psfrag}
\usepackage{mathrsfs}

\newtheorem{theorem}{Theorem}[section]
\newtheorem{proposition}[theorem]{Proposition}

\newtheorem{lemma}[theorem]{Lemma}

\theoremstyle{definition}

\theoremstyle{remark}
\newtheorem{remark}[theorem]{Remark}

\newtheorem{question}{Question}
\numberwithin{equation}{section}
\renewcommand{\t}{ \widetilde}

\newcommand{\Z}{\mathbb Z}

\newcommand{\ZZ}{{\mathbb Z}}


\newcommand{\abs}[1]{\lvert#1\rvert}

\newcommand{\co}{\colon\thinspace}
\renewcommand{\epsilon}{\varepsilon}
\renewcommand{\c}{\mathcal}

\usepackage{times}

\begin{document}
\sloppy

\title[]{Finiteness of mapping degree sets for 3-manifolds}

\author{Pierre Derbez}
\address{Centre de Math\'ematiques et d'Informatique,
Technopole de Chateau-Gombert,
39, rue Fr\'ed\'eric Joliot-Curie -
 13453 Marseille Cedex 13}
\email{derbez@cmi.univ-mrs.fr}

\author{Hongbin Sun}
\address{Department of Mathematics \\ Princeton University \\ Princeton NJ 08544 USA}
\email{hongbins@math.princeton.edu}

\author{Shicheng Wang}
\address{Department of Mathematics, Peking University, Beijing, China}
\email{wangsc@math.pku.edu.cn}


\subjclass{57M99, 55M25} \keywords{3-manifolds, mapping degrees,
finiteness}

\date{\today}
\begin{abstract}
By constructing certain maps, this note completes the answer of the
Question: For which closed orientable $3$-manifold  $N$,
 the set of mapping degrees $\c{D}(M,N)$ is finite for any closed orientable $3$-manifold $M$?
\end{abstract}


\maketitle

\vspace{-.5cm} 
\section{introduction}
 Let $M$ and $N$ be two closed oriented $3$-dimensional
manifolds. Let $\c{D}(M,N)$ be the set of degrees of maps from $M$
to $N$, that is
$$\c{D}(M,N)=\{d\in{\Z}\,| f\co M\to N,\, \, \,{\deg}(f)=d\}.$$
We  will  simply use $\c{D}(N)$ to denote $\c{D}(N,N)$, the set of
self-mapping degrees of $N$.

The calculation of $\c{D}(M,N)$ is a classical topic appeared in
many literatures. According to \cite{CT}, Gromov thought it is a
fundamental problem in topology to determine the set $\c{D}(M,N)$
for any dimension $n$.

The result is simple and well-known for dimension $n=1,2$.  For
dimension $n>3$, there are some interesting special results (See
\cite{DW} for recent ones and references therein), but it is
difficult to get general results, since there are no classification
results for manifolds of dimension $n>3$.

The case of dimension 3 becomes the most attractive in this topic.
Since Thurston's geometrization conjecture, which has been
confirmed, implies that closed orientable 3-manifolds can be
classified in reasonable sense.

A basic property of $\c{D}(M,N)$ is reflected  in the following:

\begin{question}\label{gromov} (see  also \cite[Problem A]{Re} and  \cite[Question 1.3]{W2}):
For which closed orientable $3$-manifolds $N$,  the set $\c{D}(M,N)$
is finite for any given closed oriented  $3$-manifold $M$?
\end{question}

The main result proved in this note is the following

\begin{theorem}\label{$D(M,N)$}
Let $N$ be a given closed oriented  3-manifold $N$. If
$|\c{D}(R)|=\infty$ for each prime factor $R$ of $N$, then there is
a closed orientable 3-manifold $M$ such that $|\c{D}(M,N)|=\infty$.
\end{theorem}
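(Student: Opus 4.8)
The plan is to take $M=N$ itself and to show that $\mathcal{D}(N)=\mathcal{D}(N,N)$ is already infinite. Writing the prime decomposition $N=R_1\#\cdots\#R_k$, it suffices to exhibit infinitely many integers that are simultaneously realized as self-mapping degrees of every summand, and then to assemble these into self-maps of $N$ by a connected-sum-of-maps construction. Thus the argument splits into a topological assembly step and an arithmetic step showing that the relevant degrees form an infinite set.

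First I would establish the assembly lemma: if $d\in\mathcal{D}(R_i)$ for every $i$, then $d\in\mathcal{D}(N)$. Given self-maps $f_i\co R_i\to R_i$ of degree $d$, homotope each $f_i$ so that near the ball $B_i\subset R_i$ used in forming the connected sum it agrees with a standard model of local degree $d$, sending the complement of $B_i$ into the complement of a regular value; this is a routine homotopy that concentrates the degree on a single ball. Writing $N=(R_1\setminus\mathring B_1)\cup\cdots\cup(R_k\setminus\mathring B_k)$ glued to a central piece $P=S^3\setminus(k\text{ open }3\text{-balls})$, I would take a degree-$d$ self-map of $S^3$ that is standard of local degree $d$ near $k$ chosen balls; removing these balls yields a map $P\to P$ of degree $d$ restricting to degree $d$ on each boundary sphere. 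Since all pieces agree (degree $d$) on the gluing spheres, they combine into a self-map of $N$ of degree $d$.

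Next I would show that $\bigcap_{i}\mathcal{D}(R_i)$ is infinite. Here I would invoke the classification of the closed orientable prime $3$-manifolds $R$ with $|\mathcal{D}(R)|=\infty$ together with the explicit computation of these sets: the spherical space forms (in particular lens spaces), the $S^2\times\mathbb{R}$ manifold $S^1\times S^2$, the flat manifolds, and the $\mathrm{Nil}$ manifolds (the remaining geometries $\mathbb{H}^3$, $\mathbb{H}^2\times\mathbb{R}$, $\widetilde{\mathrm{SL}}_2$ and $\mathrm{Sol}$ all give finite degree sets). The key point is that each such $\mathcal{D}(R)$ contains an infinite, multiplicatively structured subset: either a full arithmetic progression $1+p\mathbb{Z}$ (the lens/spherical and the $\mathbb{Z}$ cases, using that $1$ is a square modulo $p$) or a set of $k$-th powers $\{n^{k}:n\geq1\}$ (the $\mathrm{Nil}$ and flat cases). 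Any finite collection of such sets has infinite intersection: progressions through $1$ intersect in a progression through $1$ by the Chinese Remainder Theorem; a progression $1+p\mathbb{Z}$ meets $\{n^{k}\}$ in the infinite set $\{n^{k}:n\equiv1\ (\mathrm{mod}\ p)\}$; and power sets $\{n^{k_i}\}$ share $\{n^{\operatorname{lcm}_i k_i}\}$. Combining both steps, every $d$ in the infinite set $\bigcap_i\mathcal{D}(R_i)$ lies in $\mathcal{D}(N)$, so $M=N$ satisfies $|\mathcal{D}(M,N)|=\infty$.

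The main obstacle is the second step. The assembly construction is standard, but verifying that the common intersection is infinite genuinely relies on the prior classification of $3$-manifolds with infinite self-mapping degree set and on the explicit form of each degree set; the real content is that, across all admissible geometries, these infinite degree sets are uniformly rich enough (each containing a progression or a power set) that finitely many of them always intersect in an infinite set. I would pay particular attention to the geometries whose degree sets are not all of $\mathbb{Z}$ — the spherical space forms and the $\mathrm{Nil}$ and flat manifolds — since these are where the intersection could a priori collapse, and where the multiplicative-monoid structure of $\mathcal{D}(R)$ (namely $1\in\mathcal{D}(R)$ and closure under composition of self-maps) must be exploited.
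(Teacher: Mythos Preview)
Your approach has a genuine gap in the assembly step: the claim ``if $d\in\mathcal{D}(R_i)$ for every $i$, then $d\in\mathcal{D}(N)$'' is \emph{false} in general, and consequently the choice $M=N$ cannot work. The introduction already records the counterexample: $\mathcal{D}(T^3)=\Z$, yet $|\mathcal{D}(T^3\#T^3)|<\infty$ \cite{W1}. The reason your gluing breaks is that what is actually needed for the pieces to match along the separating spheres is a proper map $R_i\setminus B_i\to R_i\setminus B'_i$ restricting to a degree-$d$ map $\partial B_i\to\partial B'_i$; this amounts to arranging that $f_i^{-1}(B'_i)$ is a single ball, which is precisely Lemma~\ref{basic} and requires $f_i$ to be $\pi_1$-surjective. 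Merely sending the complement of $B_i$ into the complement of a regular value does not make $f_i|_{\partial B_i}$ land in a $2$-sphere in the target, so there is nothing to glue to your central piece. For $T^3$ a self-map of degree $d$ with $|d|>1$ is never $\pi_1$-surjective (on $\pi_1\cong\Z^3$ it is given by an integer matrix of determinant $d$), so the hypothesis of Lemma~\ref{basic} genuinely fails. The paper circumvents this by taking $M=N\#N$ and using the map $R\#R\to R$ built from $\mathrm{id}\vee f$ (Lemma~\ref{n+1}): the identity summand forces $\pi_1$-surjectivity regardless of $f$, at the cost of shifting the degree from $n$ to $n+1$; only then does the connected-sum assembly (Lemma~\ref{supset}) go through.

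There is also a secondary error in your list of geometries: closed prime $3$-manifolds supporting $H^2\times E^1$ or Sol geometry \emph{do} have infinite self-mapping degree sets (they are covered by trivial circle bundles and by torus bundles, respectively; see Proposition~\ref{D(N)} and Lemma~\ref{D(R)}), so they must be included in the case analysis.
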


Theorem \ref{$D(M,N)$} follows from  an explicit result Theorem
\ref{$D(M,N)$*}, which provides the concrete $M$ and the infinite
set in $\c{D}(M,N)$ for the given $N$. The proof of Theorem
\ref{$D(M,N)$} (\ref{$D(M,N)$*}) is essentially  elementary, which
does not appear until now mainly due to two reasons:

(1) $|\c{D}(N)|$ may be finite even $|\c{D}(R)|=\infty$ for each
prime factor $R$ of $N$; for example $|\c{D}(T^3)|=\infty$ but
$|\c{D}(T^3\#T^3)|<\infty$ for 3-dimensional torus $T^3$ \cite{W1}.
Such phenomenon puzzled us to wonder if Theorem \ref{$D(M,N)$} was
always to be true \cite[page 460]{W2}.

(2) The target concerned in Theorem \ref{$D(M,N)$} became the only
unknown case for Question \ref{gromov}  just very recently. Now
Theorem \ref{$D(M,N)$} completes the answer of Question \ref{gromov}
and we have

\begin{theorem}\label{main} Let $N$ be a  closed orientable
3-manifold. Then there is  a  closed orientable 3-manifold $M$ such
that $|\c{D}(M,N)|=\infty$ if and only if  $|\c{D}(R)|=\infty$  for
each prime factor $R$ of $N$.
\end{theorem}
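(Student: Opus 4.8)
The plan is to prove the biconditional of Theorem \ref{main} by treating its two implications separately: the ``if'' direction is supplied by the newly established Theorem \ref{$D(M,N)$}, while the ``only if'' direction is reduced to the prime case of Question \ref{gromov}, which is already settled in the literature. Concretely, if $|\c{D}(R)|=\infty$ for every prime factor $R$ of $N$, then Theorem \ref{$D(M,N)$} produces a closed orientable $3$-manifold $M$ with $|\c{D}(M,N)|=\infty$, so this direction requires nothing beyond quoting that theorem.

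For the converse I would argue by contraposition: assuming that some prime factor $R_0$ of $N$ satisfies $|\c{D}(R_0)|<\infty$, I want to conclude that $|\c{D}(M,N)|<\infty$ for every closed orientable $M$. The first step is to reduce the connected-sum target to a single prime factor. Writing the prime decomposition $N=R_1\#\cdots\#R_k$ with $R_0$ one of the summands, collapsing the part of $N$ coming from the other summands to a point yields a degree-one pinch map $p\co N\to R_0$. Hence, for any $f\co M\to N$ of degree $d$, the composite $p\circ f\co M\to R_0$ also has degree $d$, so that $d\in\c{D}(M,R_0)$ and therefore $\c{D}(M,N)\subseteq\c{D}(M,R_0)$. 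It thus suffices to bound the degrees of maps into the single prime manifold $R_0$.

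The remaining ingredient is the prime case of Question \ref{gromov}: for a prime closed orientable $3$-manifold $R_0$, the condition $|\c{D}(R_0)|<\infty$ forces $|\c{D}(M,R_0)|<\infty$ for all $M$. I would invoke this classically known fact directly. Its mechanism is that such an $R_0$ carries a nonvanishing degree-estimating invariant along its geometric (JSJ) decomposition: a hyperbolic piece contributes positive Gromov simplicial volume, giving $|d|\le\|M\|/\|R_0\|$, while the Seifert and graph pieces are controlled by the Seifert volume; additivity of these invariants over the decomposition bounds the incoming degrees exactly in the regime where the self-mapping degree set is finite. Combining $\c{D}(M,N)\subseteq\c{D}(M,R_0)$ with this finiteness gives $|\c{D}(M,N)|<\infty$, completing the contrapositive and hence the ``only if'' direction.

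The genuine difficulty is concentrated entirely in this last step, the prime case, since that is where three-dimensional geometry (simplicial and Seifert volume, geometrization, and the explicit classification of self-mapping degree sets of the geometric pieces) enters; by contrast, the pinch-map reduction and the appeal to Theorem \ref{$D(M,N)$} are formal. As the prime case is precisely the portion of Question \ref{gromov} that was already resolved, the content new to this note lies in the ``if'' direction, and Theorem \ref{main} emerges as the clean synthesis of that construction with the earlier finiteness results.
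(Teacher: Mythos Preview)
Your proposal is correct and follows essentially the same route as the paper: the ``if'' direction is Theorem~\ref{$D(M,N)$}, and the ``only if'' direction is the combination of Proposition~\ref{D(N)} with Theorems~\ref{simplicial volume}, \ref{Seifert volume}, and \ref{virtully positive Seifert volume}. The only cosmetic difference is that you insert an explicit degree-one pinch map $N\to R_0$ to reduce to the prime target, whereas the paper quotes those three theorems in the form already stated for a connected sum $N$ having such a prime factor; the underlying content is identical.
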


In the following we will make a brief recall of the development of
Theorem \ref{main}. To be able to do this we need to have a brief
look of today's picture of 3-manifolds.

\vskip 0.4 truecm

{\bf The picture of 3-manifolds:} Each closed orientable 3-manifold
$N$ has unique prime decomposition $N_1\#.....\#N_k$, the prime
factors are unique up to the order and up to homeomorphisms. Each
closed orientable prime 3-manifold $N$ has a unique geometric
decomposition  such that each geometric piece supports one of the
following eight geometries: $H^3$, $\widetilde{PSL}(2,R)$,
$H^2\times E^1$, Sol, Nil, $E^3$, $S^3$ and $S^2\times E^1$ (where
$H^n$, $E^n$ and $S^n$ are n-dimensional hyperbolic space, Euclidean
space and sphere respectively), for details see \cite{Th} and
\cite{Sc}. Moreover each geometric piece of $N$ with non-trivial
geometric decomposition supports either $H^3$-geometry or $H^2\times
E^1$-geometry, hence each 3-manifold supporting one of the remaining
six geometry is closed. Furthermore each 3-manifold supporting
geometries of either $H^2\times E^1$, or $E^3$, or $S^2\times E^1$
is covered by a trivial circle bundle, and each 3-manifold
supporting geometries of either Sol, or  Nil, or $E^3$ is covered by
a torus bundle. Call prime closed orientable 3-manifold $N$ a {\it
non-trivial graph manifold} if $N$ has non-trivial geometric
decomposition but contains no hyperbolic piece.

\vskip 0.4 truecm

{\bf The development of Theorem \ref{main}:} It is a common sense
for many people that $|D(N)|=\infty$ for 3-manifold $N$ which is
either a product of a surface and the circle, or $N$ is covered by
the 3-sphere.
 The first significant result in this direction
is due to Milnor and Thurston in the later 1970's. By using the
minimum integer number of 3-simplices to build $N$ \cite[Theorem
2]{MT}, they proved

\begin{theorem}\label{Milnor-Thurston}
For each given hyperbolic 3-manifold $N$, $|\c{D}(M,N)|<\infty$ for
any $M$.
\end{theorem}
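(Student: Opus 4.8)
The plan is to prove the finiteness via Gromov's simplicial volume, which furnishes a homotopy-invariant lower bound on the complexity of representing the fundamental class. For a closed oriented $3$-manifold $X$, write $\|X\|$ for its simplicial volume: the infimum of $\sum_i |a_i|$ taken over all real singular cycles $\sum_i a_i\sigma_i$ representing the fundamental class $[X]\in H_3(X;\R)$. Every closed triangulable manifold represents its fundamental class by a finite integral chain, so $\|M\|<\infty$ for our given $M$.

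The first step is to record the monotonicity of $\|\cdot\|$ under maps. If $f\co M\to N$ is continuous of degree $d$, then $f_*[M]=d\,[N]$ in $H_3(N;\R)$. Since pushing a singular chain forward by $f$ never increases its $\ell^1$-norm, any cycle of small norm representing $[M]$ maps to a cycle representing $d\,[N]$ of no larger norm, yielding the fundamental inequality $|d|\cdot\|N\|\le\|M\|$. This step is essentially formal.

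The second, and crucial, step is to show $\|N\|>0$ when $N$ is hyperbolic. Here I would invoke the Gromov--Thurston proportionality principle: for a closed hyperbolic $3$-manifold one has $\|N\|=\mathrm{Vol}(N)/v_3$, where $v_3>0$ is the volume of a regular ideal tetrahedron in $H^3$. Combining with the previous step, every $d\in\c{D}(M,N)$ satisfies $|d|\le\|M\|/\|N\|=v_3\,\|M\|/\mathrm{Vol}(N)$, a bound finite and independent of $f$. Hence $\c{D}(M,N)$ is a bounded set of integers, and therefore finite.

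The main obstacle is the positivity $\|N\|>0$, that is, the volume estimate underlying proportionality. I expect its proof to require straightening: lift each singular simplex of a representing cycle to $H^3$, replace it by the geodesic simplex on the same vertices without altering the homology class, and use that every geodesic $3$-simplex in $H^3$ has volume at most $v_3$. Summing over the cycle representing $[N]$ gives $\mathrm{Vol}(N)\le v_3\,\|N\|$, whence $\|N\|\ge\mathrm{Vol}(N)/v_3>0$. This use of hyperbolic convexity and the sharp simplex-volume bound is where the geometry of $N$ enters essentially; note that only this one inequality, not the full proportionality equality, is needed to obtain finiteness.
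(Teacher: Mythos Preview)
Your argument is correct. Note, however, that the paper does not prove this theorem itself but attributes it to Milnor and Thurston, describing their method as using ``the minimum integer number of $3$-simplices to build $N$'' (the invariant $\sigma(N)$ of \cite{MT}), rather than Gromov's real $\ell^1$-norm $\|N\|$. Your route via $\|\cdot\|$ is exactly what the paper presents in the paragraph following Theorem~\ref{Milnor-Thurston} as Gromov's refinement, leading to the stronger Theorem~\ref{simplicial volume}. The two arguments are close cousins: both rest on a degree inequality for a simplex-counting invariant together with a positive lower bound coming from the hyperbolic volume and the uniform bound $v_3$ on geodesic simplex volumes. The integer version is more elementary (no singular homology or $\ell^1$-completion needed), while the real version you use is the one that generalizes cleanly to manifolds with hyperbolic JSJ pieces via additivity under gluing along incompressible tori.
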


Gromov \cite{G} introduced the simplicial volume $\|N\|$ for a
manifold $N$, which  is approximately the minimum real number of
3-simplices to build $N$. Gromov and Thurston proved that $\|N\|$
 is proportional to  the hyperbolic
volume of $N$ in the case of $N$ is a hyperbolic 3-manifold, and
then Soma proved  $\|N\|$ is proportional to the sum of the
hyperbolic volume of the hyperbolic pieces in the geometric
decomposition of $N$ (see \cite{G}, \cite{Th}, \cite{So}). $||*||$
respects  the mapping degrees, i.e.
 for any map $f\co M\to N$ then
 $||M||\geq\abs{{\rm deg}(f)}\cdot ||N||$. Then it is deduced that

\begin{theorem}\label{simplicial volume}
Suppose $N$ is a closed orientable 3-manifold. If a prime factor of
$N$  having hyperbolic piece in its geometric decomposition, then
$|\c{D}(M,N)|<\infty$ for any $M$.
\end{theorem}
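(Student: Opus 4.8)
The plan is to use the simplicial volume $\|\cdot\|$ as a degree-bounding invariant, combining the two facts already recalled above: the proportionality theorem of Gromov--Thurston--Soma, which says that $\|N\|$ equals a fixed positive multiple of the total volume of the hyperbolic pieces in the geometric decomposition, and the monotonicity $\|M\|\geq\abs{\deg(f)}\cdot\|N\|$ valid for every map $f\co M\to N$.

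First I would establish positivity of $\|N\|$. Let $R$ be the prime factor of $N$ that contains a hyperbolic piece in its geometric decomposition. By the proportionality theorem that hyperbolic piece contributes strictly positive volume, so $\|R\|>0$. I then transfer this positivity to the whole connected sum $N$. One option is to invoke Gromov's additivity of simplicial volume under connected sum in dimension $\geq 3$, which gives
\[
\|N\|=\sum_i \|N_i\|\geq\|R\|>0.
\]
Alternatively, avoiding additivity, I would use the standard degree-one collapse map $c\co N\to R$ obtained by crushing every summand other than $R$ to a point; applying the monotonicity inequality to $c$ yields $\|N\|\geq\abs{\deg(c)}\cdot\|R\|=\|R\|>0$.

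Next I would fix $M$ and let $f\co M\to N$ be arbitrary. The monotonicity inequality gives
\[
\abs{\deg(f)}\leq\frac{\|M\|}{\|N\|}.
\]
Here $\|M\|$ is a finite real number determined by $M$ alone, and by the previous step the denominator $\|N\|$ is a fixed positive constant. Hence every degree of a map $M\to N$ lies in the finite set $\{d\in\Z\,:\,\abs{d}\leq\|M\|/\|N\|\}$, so $\c{D}(M,N)$ is finite.

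The statement is essentially a formal consequence of the two deep results quoted in the excerpt, so I do not expect a serious obstacle. The one step needing a little care is the positivity of $\|N\|$ for the possibly reducible manifold $N$: one must check that the hyperbolic contribution coming from a single prime summand is not lost when passing to the connected sum. This is exactly what either additivity or the collapse map guarantees, and it is the place where the hypothesis that \emph{some} prime factor carries a hyperbolic piece is used.
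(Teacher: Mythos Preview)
Your argument is correct and matches the paper's own deduction: the paper simply records the Gromov--Thurston--Soma proportionality result and the degree inequality $\|M\|\ge|\deg f|\cdot\|N\|$, and then states that Theorem~\ref{simplicial volume} ``is deduced'' from these. Your proposal is just a slightly more explicit version of the same reasoning, in particular spelling out the passage from positivity of $\|R\|$ for the prime factor to positivity of $\|N\|$ via additivity (or the degree-one collapse), a step the paper leaves implicit.
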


 Brooks and Goldman   \cite{BG1} \cite{BG2} introduced
 the Seifert volume $SV(*)$ for closed orientable 3-manifolds which also respects the mapping degrees and is non-zero for
 each
3-manifold supporting the $\widetilde{PSL}(2, R)$ geometry. Then it
is deduced that

\begin{theorem}\label{Seifert volume}
Suppose $N$ is a closed orientable 3-manifold. If a prime factor of
$N$  supporting $\widetilde{PSL}(2, R)$ geometry. Then
$|\c{D}(M,N)|<\infty$ for any $M$.
\end{theorem}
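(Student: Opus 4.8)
The plan is to reduce the problem to the distinguished prime factor and then invoke the two properties of the Seifert volume in exactly the way Theorem \ref{simplicial volume} uses the corresponding properties of the simplicial volume. Write the prime decomposition $N = R \# N'$, where $R$ is the prime factor supporting $\widetilde{PSL}(2,R)$ geometry; such an $R$ is automatically closed and prime, so it genuinely occurs as one of the summands. The two facts I would lean on are that $SV$ is positive on $R$ and that any map into $N$ can be pushed forward to a map into $R$ of the same degree, so that degrees to $N$ are controlled by degrees to $R$.

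First I would recall the standard pinch construction: collapsing the complementary summand $N'$ to a point produces a map $c \co N \to R$ of degree $1$. Given any $f \co M \to N$ with $\deg(f) = d$, multiplicativity of degree under composition gives $\deg(c \circ f) = \deg(c)\cdot \deg(f) = d$, so $c \circ f \co M \to R$ realizes the degree $d$. Consequently $\c{D}(M,N) \subseteq \c{D}(M,R)$, and it suffices to prove that $\c{D}(M,R)$ is finite.

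Next I would apply the two properties of $SV$ recorded above to the target $R$. Since $R$ supports $\widetilde{PSL}(2,R)$ geometry we have $SV(R) > 0$, and since $SV$ respects mapping degrees, any $g \co M \to R$ satisfies $SV(M) \geq \abs{\deg(g)}\cdot SV(R)$. Hence every degree in $\c{D}(M,R)$ has absolute value at most $SV(M)/SV(R)$, a finite bound depending only on $M$ and $R$. Thus $\c{D}(M,R)$, being a bounded subset of $\Z$, is finite, and therefore so is $\c{D}(M,N)$.

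Finally, a word on where the difficulty lies. Given the two structural facts about $SV$ — positivity on $\widetilde{PSL}(2,R)$-manifolds and monotonicity under degree — the deduction above is formal and mirrors the simplicial-volume argument verbatim; the genuine content is Brooks and Goldman's construction of $SV$ and the proof of those two properties, which I take as given here. The only point inside the present argument that requires a moment's care is the passage from $N$ to its prime factor $R$, namely the existence of the degree-one collapse map and the resulting inclusion $\c{D}(M,N) \subseteq \c{D}(M,R)$; everything else is the monotonicity inequality applied a single time.
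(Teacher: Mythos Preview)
Your proposal is correct and is essentially the deduction the paper has in mind: the paper does not give a detailed proof but simply records that the theorem ``is deduced'' from the two Brooks--Goldman facts that $SV$ respects mapping degrees and that $SV(R)>0$ for $\widetilde{PSL}(2,\R)$-manifolds, exactly paralleling the simplicial-volume case. Your use of the degree-one pinch $c\co N\to R$ to pass from $N$ to its distinguished prime factor is the standard way to make this deduction explicit (equivalently, one could apply monotonicity to $c$ itself to conclude $SV(N)\ge SV(R)>0$ and then bound $\c{D}(M,N)$ directly); either organization uses the same ingredients and matches the paper's intended argument.
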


Both Theorems \ref{simplicial volume} and \ref{Seifert volume} were
already known in the early 1980's. The following result is known no
later than early 1990's (see \cite{W1} for example).

\begin{proposition}\label{D(N)}
Suppose $N$ is a closed orientable 3-manifold. Then
$|\c{D}(N)|=\infty$
 if and only if either $N$ is
covered by a torus bundle or a trivial circle bundle, or each prime
factor of $N$ is covered by $S^3$ or $S^2\times E^1$.
\end{proposition}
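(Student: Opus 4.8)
The plan is to prove the two implications separately, stratifying $N$ by its prime-and-geometric decomposition and matching each stratum either to an explicit family of self-maps of unbounded degree or to a rigidity argument forcing $|\c{D}(N)|<\infty$. For the necessity direction I would argue the contrapositive, and here the two volume theorems already recalled do most of the work: applying Theorem \ref{simplicial volume} with $M=N$ shows that if some prime factor of $N$ carries a hyperbolic JSJ piece then $\c{D}(N)=\c{D}(N,N)$ is finite, and applying Theorem \ref{Seifert volume} with $M=N$ shows that if some prime factor supports $\widetilde{PSL}(2,\R)$ geometry then again $\c{D}(N)$ is finite. Thus, under the standing assumption $|\c{D}(N)|=\infty$, no prime factor has a hyperbolic piece and none supports $\widetilde{PSL}(2,\R)$.

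What then survives is that every prime factor is covered by $S^3$ or by $S^2\times E^1$, or is a closed geometric manifold of type $H^2\times E^1$, $\mathrm{Sol}$, $\mathrm{Nil}$, $E^3$, or is a nontrivial graph manifold whose pieces are all $H^2\times E^1$-Seifert. The geometric primes fall neatly into the three listed configurations: $\mathrm{Sol}$, $\mathrm{Nil}$ and $E^3$ manifolds are finitely covered by a torus bundle (condition (a)); $H^2\times E^1$ manifolds, being Seifert with zero Euler number over a hyperbolic base, and $S^2\times S^1$, are finitely covered by a trivial circle bundle (condition (b)); spherical space forms and $S^2\times S^1$ are covered by $S^3$ or $S^2\times E^1$ (condition (c)). Hence the only survivors \emph{not} already on the allowed list are (I) a prime nontrivial graph manifold that is not a torus bundle, and (II) a connected sum with at least two prime factors one of which is aspherical; for the necessity direction it remains to show that in these two cases $\c{D}(N)$ is in fact finite.

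For the sufficiency direction I would construct, in each of the three families, self-maps of unbounded degree. When $N$ is finitely covered by a torus bundle I would realize endomorphisms of $\pi_1(N)$ that multiply the fibre direction(s), producing degrees tending to infinity (for instance $d^3$ for flat manifolds, and polynomially growing degrees built from powers of the $\mathrm{Sol}$ or $\mathrm{Nil}$ monodromy composed with self-covers of the base circle). When $N$ is covered by a trivial circle bundle I would use the vertical $S^1$-structure to multiply the fibre, giving self-maps of degree $d$ for infinitely many $d$. When every prime factor is covered by $S^3$ or $S^2\times E^1$ I would combine the classical self-maps of lens-type spaces and the degree-$d$ maps of $S^2\times S^1$ with pinch (comultiplication) maps on the connected sum; the decisive point is that \emph{no} aspherical summand is present, so the pinch maps lose no degree and the factor families assemble to give $|\c{D}(N)|=\infty$.

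The main obstacle is cases (I) and (II) of the necessity direction, where the simplicial and Seifert volumes both vanish and so give no information. For the graph-manifold case the strategy is to homotope a nonzero-degree self-map so that it preserves the canonical JSJ tori and restricts to maps of the Seifert pieces; the induced map on each hyperbolic base $2$-orbifold then has degree bounded by multiplicativity of the (negative) orbifold Euler characteristic, and the gluing matrices across the JSJ tori couple the fibre degrees so rigidly that, precisely because there is \emph{no} global torus-bundle fibre to multiply, only finitely many total degrees can occur. For the connected-sum case the strategy is to invoke Kneser--Milnor and the sphere theorem to bring a nonzero-degree self-map into standard form relative to the connected-sum spheres, so that it induces an endomorphism of the free product $\pi_1(N_1)\ast\cdots\ast\pi_1(N_k)$; by the Kurosh subgroup theorem each aspherical free factor is carried into a conjugate of a single factor, and tracking how the induced maps between aspherical summands reassemble to the global degree shows that a large degree would have to be concentrated on one aspherical summand as an honest self-covering, which the presence of the remaining summands and the rigidity of the sphere decomposition forbid. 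This is exactly the mechanism making $|\c{D}(T^3\#T^3)|<\infty$ even though $|\c{D}(T^3)|=\infty$, and making both normalizations precise while controlling the resulting degrees is the technical heart of the proof.
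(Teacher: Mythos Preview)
The paper does not actually prove Proposition~\ref{D(N)}; it records the result as known ``no later than early 1990's'' and cites \cite{W1}. Consequently there is no in-paper argument to compare your proposal against, and your outline is in fact a reasonable reconstruction of the kind of argument carried out in \cite{W1}.

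Two comments on accuracy. First, in your sufficiency direction for condition~(c) (every prime factor covered by $S^3$ or $S^2\times E^1$), the phrase ``the pinch maps lose no degree'' is not quite the operative mechanism. The point is rather that each such prime factor admits \emph{$\pi_1$-surjective} self-maps of degree $d$ for all $d$ in a fixed arithmetic progression (e.g.\ $d\equiv 1 \bmod |\pi_1|$ in the spherical case), and $\pi_1$-surjectivity is exactly what allows one to glue these along the connected-sum spheres into a self-map of $N$ of the same degree, as in Lemmas~\ref{basic}--\ref{supset} of the present paper. Without $\pi_1$-surjectivity the gluing step fails, and this is precisely why the presence of an aspherical summand obstructs the construction.

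Second, your necessity cases (I) and (II) are correctly identified as the substantive part of the argument: both simplicial and Seifert volume vanish there, so one must argue directly. Your sketch for (I) (normalize a nonzero-degree self-map to respect the JSJ tori, bound the base-orbifold degrees via Euler characteristic, and use the gluing matrices to rigidify the fibre degrees) and for (II) (normalize relative to the Kneser--Milnor spheres and apply Kurosh to force the aspherical factor to map to a conjugate of a single factor) are the right skeletons and match the approach of \cite{W1}. Note that for (I) one cannot simply invoke Theorem~\ref{virtully positive Seifert volume}, since that result postdates Proposition~\ref{D(N)} by many years and in any case is stronger than what is needed for self-maps.
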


After Theorems \ref{simplicial volume} \ref{Seifert volume} and
Proposition
 \ref{D(N)}, the remaining unknown cases for Question
 \ref{gromov} are:  either $N$ is a non-trivial graph manifold;  or  $N$ is  a non-prime 3-manifold,  and $|\c{D}(R)|=\infty$ for each prime
 factor $R$ of $N$,  but some $R$ is not covered by either $S^3$ or $S^2\times E^1$.

 In 2009  it is  proved in \cite{DeW} that each closed
orientable non-trivial graph manifold $N$ has a finite covering
$\t{N}$ with positive Seifert volume (it is still unknown weather
$SV(\tilde N)>0$ implies $SV(N)>0$ for a finite cover $\tilde N\to
N)$), and therefore it is deduced that

\begin{theorem}\label{virtully positive Seifert volume} Let $N$ be  closed orientable
non-trivial graph manifold. Then $|\c{D}(M,N)|< \infty$ for any
closed orientable 3-manifold $M$.
\end{theorem}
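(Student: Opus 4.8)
The plan is to leverage the one substantial input quoted above, namely the result of \cite{DeW}: the given non-trivial graph manifold $N$ admits a finite cover $p\co\t N\to N$, of some degree $n$, with $SV(\t N)>0$. The difficulty is that we may \emph{not} apply the Seifert-volume inequality directly to $N$, since it is unknown whether $SV(N)>0$ (indeed $SV(N)$ may well vanish, as the parenthetical remark preceding the statement emphasizes). The idea is therefore to transfer a given map $f\co M\to N$ up to the cover $\t N$ by a fiber-product construction, and to run the degree inequality upstairs, where the volume is positive.

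Concretely, fix once and for all the cover $p\co\t N\to N$ with $SV(\t N)>0$ and $\deg(p)=n$. Given any $f\co M\to N$ with $\deg(f)=d$, I would form the pullback
$$P=\{(x,y)\in M\times\t N\ :\ f(x)=p(y)\},$$
with the two projections $q\co P\to M$ and $\t f\co P\to\t N$, so that $p\circ\t f=f\circ q$. Then $q$ is an $n$-fold covering of $M$ (possibly disconnected, since $P$ need not be connected), because the fiber of $q$ over $x$ is $p^{-1}(f(x))$, which has exactly $n$ points; in particular $P$ is an oriented closed $3$-manifold. The first point to verify is that $\t f$ still has degree $d$: since $\deg(q)=\deg(p)=n$, comparing the degrees of the two composites $P\to N$ gives $n\cdot\deg(\t f)=d\cdot n$, hence $\deg(\t f)=d$ (reading $\deg(\t f)$ as the signed count of preimages of a generic point of the connected manifold $\t N$, i.e.\ summing over the components of $P$ when $P$ is disconnected).

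Next I would invoke two properties of the Seifert volume. First, $SV$ is multiplicative under finite covers, so $SV(P)=n\cdot SV(M)$ \emph{regardless of which map $f$ produced $P$}; this is exactly where it matters that the cover $\t N\to N$, and hence the integer $n$, is fixed in advance and independent of $f$. Second, $SV$ respects mapping degrees, which applied to $\t f\co P\to\t N$ (component by component and then summed) yields $SV(P)\ge|d|\cdot SV(\t N)$. Combining the two estimates gives
$$|d|\ \le\ \frac{SV(P)}{SV(\t N)}\ =\ \frac{n\cdot SV(M)}{SV(\t N)}.$$
Since $n$, $SV(M)$ and $SV(\t N)$ are finite constants depending only on $M$, $N$ and the chosen cover, and $SV(\t N)>0$, the right-hand side is a uniform bound on $|\deg f|$ over all maps $f\co M\to N$. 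Hence $\c D(M,N)$ is bounded, and therefore finite.

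The conceptual content is the transfer mechanism: a fixed finite cover with positive Seifert volume suffices to bound all degrees into $N$, even when $SV(N)=0$. The main obstacle is thus not conceptual but lies in the bookkeeping around the possibly disconnected pullback. One must confirm that $P$ is an honest $n$-fold cover of $M$ with the multiplicativity $SV(P)=n\,SV(M)$ holding across components, and that the lifted map $\t f$ genuinely has degree $d$ once signs and components are tracked correctly; these checks are where a careful argument is required, after which Theorem \ref{virtully positive Seifert volume} falls out of \cite{DeW} together with the functoriality of $SV$.
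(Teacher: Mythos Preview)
Your overall strategy---pulling $f$ back along the fixed finite cover $p\co\t N\to N$ and applying the Seifert-volume degree inequality upstairs---is exactly the intended deduction from \cite{DeW}; the paper itself offers no further detail than ``therefore it is deduced that''.

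There is, however, a genuine gap in your execution. You assert that $SV$ is multiplicative under finite covers, so that $SV(P)=n\cdot SV(M)$. This is not known. The degree inequality applied to the covering $q\co P\to M$ yields only $SV(P)\ge n\cdot SV(M)$; the reverse inequality $SV(P)\le n\cdot SV(M)$, which is the direction you actually need in order to bound $|d|$, would in particular force $SV(\t N)>0\Rightarrow SV(N)>0$---precisely the implication the paper's parenthetical remark declares to be open. So the very sentence you invoked to motivate the detour through $\t N$ also undercuts the step you use to close the argument.

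The repair is standard. Since $\pi_1(M)$ is finitely generated, $M$ admits only finitely many connected covers of degree at most $n$ up to equivalence, and hence only finitely many (possibly disconnected) $n$-fold covers. Setting $C:=\max_P SV(P)$ over this finite list produces a constant depending only on $M$ and $n$, not on $f$. Your inequality $SV(P)\ge |d|\cdot SV(\t N)$ then gives $|d|\le C/SV(\t N)$, the desired uniform bound. Replace the multiplicativity claim by this finiteness argument and the proof goes through.
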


Theorems \ref{simplicial volume}  \ref{Seifert volume},
\ref{virtully positive Seifert volume} and \ref{$D(M,N)$} (and
Proposition \ref{D(N)}) imply Theorem \ref{main}.

\begin{remark} Recently $\c{D}(N)$ is completely determined for each $N$ with $|\c{D}(N)|=\infty$
(\cite{Du},
 \cite{SWW}, \cite{SWWZ}), which is useful in the proof of
 Theorem \ref{$D(M,N)$} (\ref{$D(M,N)$*}).
\end{remark}


\section{Proof of Theorem \ref{$D(M,N)$}}

Call a map $f: M\to N$ between connected manifolds is
$\pi_1$-surjective if the induced $f_*:\pi_1M\to \pi_1N$ is
surjective. We start with the following classical  fact in topology,
whose proof is inspired by Stallings's elegant proof of Grushko's
theorem \cite{St} and appeared in several papers (for an easy and
recent one, see \cite{RW}).

\begin{lemma}\label{basic}
Let $f\co M\to N$ be a $\pi_1$-surjective nonzero degree map between
closed oriented  $n$-manifolds, with $n\geq 3$. Then for any
$n$-ball $B$ in $N$, there exists a map $g$ homotopic to $f$ such
that $g^{-1}(B)$ is an $n$-ball in $M$.
\end{lemma}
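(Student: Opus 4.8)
The plan is to put $f$ into a standard form by transversality and then to repeatedly merge the components of the preimage of $B$, using embedded arcs whose images in $N$ are controlled by the $\pi_1$-surjectivity hypothesis. First, since any two smoothly embedded $n$-balls in the connected manifold $N$ are ambient isotopic, I may assume $B$ is a small round ball centered at a point $p$ (solve the problem for the small ball and then compose $f$ with the isotopy carrying it to the given ball). I would homotope $f$ to a smooth map transverse to $p$, so that $f^{-1}(p)=\{x_1,\dots,x_m\}$ is a finite set of regular points. Taking $B$ small enough, $f^{-1}(B)$ is then a disjoint union of $n$-balls $B_1,\dots,B_m$, each carried homeomorphically onto $B$, while $M\setminus\bigcup_i B_i$ is sent into $N\setminus\mathrm{int}\,B$. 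Because $\deg f\neq 0$ the signed count of the $x_i$ is nonzero, so $m\geq 1$ and the preimage is nonempty; if $m=1$ we are done, and otherwise I reduce $m$ by one and iterate.

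For the inductive step I pick two of the balls, say $B_1$ and $B_2$. Since $M$ is connected there is an arc from $B_1$ to $B_2$, and its image under $f$ is a loop $\ell$ in $N$ based at $p$. Here the surjectivity of $f_*\colon\pi_1 M\to\pi_1 N$ is the crucial ingredient: choosing a loop $\delta$ in $M$ based in $B_1$ with $f_*[\delta]=[\ell]^{-1}$ and pre-concatenating it with the arc, I obtain a path from $B_1$ to $B_2$ whose $f$-image is null-homotopic in $N$. By general position this path may be taken to be an embedded arc $\alpha$ with interior disjoint from $\bigcup_i B_i$ and from the other arcs; this is exactly where the hypothesis $n\geq 3$ enters, since a generic $1$-complex in an $n$-manifold is embedded and admits disjoint pushoffs once $n\geq 3$.

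Finally, let $D$ be a regular neighborhood of the collapsible set $B_1\cup\alpha\cup B_2$, so that $D$ is an $n$-ball containing $B_1$ and $B_2$ in its interior. Using the null-homotopy of $f|_\alpha$ to guide the construction, I would homotope $f$, supported in a slightly larger ball $D^+\supset D$, to a map $g$ that drags all of $D$ into $B$, sends $\partial D$ to $\partial B$, and keeps $D^+\setminus\mathrm{int}\,D$ inside $N\setminus\mathrm{int}\,B$ while agreeing with $f$ on $\partial D^+$. Then $g^{-1}(B)$ has the two former components $B_1,B_2$ replaced by the single ball $D$, so the number of components drops by one. Iterating until a single component remains yields the desired $g$ homotopic to $f$.

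I expect the last step to be the main obstacle. The conceptual point is settled once $\pi_1$-surjectivity produces a null-homotopic arc image, but converting that null-homotopy into an honest homotopy of $f$ that pulls the connecting tube, together with its side walls lying on $\partial D$, into $B$ while altering $f$ only across the collar $D^+\setminus\mathrm{int}\,D$ and guaranteeing that the preimage of $B$ becomes exactly $D$, is the delicate (though essentially routine) piece of the argument.
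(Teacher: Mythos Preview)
The paper does not actually give a proof of this lemma: it records it as a classical fact ``whose proof is inspired by Stallings's elegant proof of Grushko's theorem'' and refers to \cite{St} and \cite{RW} for details. Your sketch is precisely that Stallings-type argument---put $f$ transverse to a point, get finitely many ball preimages, and use $\pi_1$-surjectivity together with general position in dimension $\geq 3$ to tube two of them together along an arc with null-homotopic image---so you are following exactly the approach the paper points to, and the step you flag as ``delicate but routine'' is the one carried out carefully in \cite{RW}.
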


Denote the subset of $\c{D}(M, N)$ which realized by
$\pi_1$-surjective map $f:M\to N$ as $\c{D}_{surj}(M,N)$. Then the
fact below is primary for our construction.

\begin{lemma} \label{supset} Suppose $f_i: M_i\to N_i$ is a $\pi_1$-surjective map of
degree $d$ between closed oriented  $3$-manifolds, $i=1,...,k$. Then
there is a $\pi_1$-surjective map $f: M_1\# ...\#M_k\to
N_1\#...\#N_k$ of degree $d$. In particular,
 $$\c{D}_{surj}(M_1\#...\#M_k, N_1\# ...\#N_k)\supset \c{D}_{surj}(M_1, N_1)\cap
 ...\cap \c{D}_{surj}(M_k, N_k).$$
\end{lemma}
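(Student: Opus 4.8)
The plan is to build the connected-sum map by taking the connected sum of the domains and targets along balls that are placed compatibly with respect to each $f_i$, and to verify that the degree and $\pi_1$-surjectivity are preserved. The key enabling tool is Lemma \ref{basic}: since each $f_i$ is a $\pi_1$-surjective nonzero degree map (degree $d$; if $d=0$ the statement is trivial since the zero map works and $0\in\c{D}_{surj}$ always), we may homotope $f_i$ so that the preimage of a chosen $3$-ball $B_i\subset N_i$ is a single $3$-ball $\t{B}_i\subset M_i$, with $f_i$ mapping $\t{B}_i$ to $B_i$ by a degree-$d$ map of balls (rel boundary spheres).

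The main construction proceeds as follows. First I would form $N_1\#\cdots\#N_k$ by removing an open ball from each $N_i$ and gluing the resulting boundary spheres together along a connecting pattern (equivalently, excise the $B_i$ and glue in the connecting tubes $S^2\times I$). Correspondingly, I would form $M_1\#\cdots\#M_k$ by removing the open ball $\t{B}_i$ from each $M_i$ and gluing along the corresponding pattern. Since each $f_i$ sends $\b\t{B}_i=S^2$ to $\b B_i=S^2$, the maps $f_i|_{M_i\setminus\mathrm{int}\,\t{B}_i}$ agree on the glued-in spheres after an isotopy matching the boundary parametrizations, so they assemble into a single continuous map $f\co M_1\#\cdots\#M_k\to N_1\#\cdots\#N_k$. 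On each connecting tube $S^2\times I$ one defines $f$ by the obvious product map, which has local degree $d$ and is compatible with the orientations.

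Next I would check the two claimed properties. For the degree: outside the balls each $f_i$ has degree $d$ (computing degree via a regular value in the interior of a single summand $N_i\setminus B_i$, whose preimage lies entirely in $M_i\setminus\t{B}_i$ and is counted with total signed multiplicity $d$), and the connecting regions contribute consistently, so $\deg f=d$. For $\pi_1$-surjectivity: by van Kampen's theorem $\pi_1(N_1\#\cdots\#N_k)$ is the free product $\pi_1 N_1 * \cdots * \pi_1 N_k$, and likewise for the domain; since $f$ restricts (up to basepoint paths) to each $f_i$ on the $i$-th summand and each $(f_i)_*$ is surjective onto $\pi_1 N_i$, the induced map hits every free factor and hence is surjective. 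One must track basepoints carefully along the connecting tubes so that the wedge structure on $\pi_1$ is respected, which is routine once the balls and connecting pattern are fixed. The final inclusion of sets then follows immediately: if $d\in\c{D}_{surj}(M_i,N_i)$ for every $i$, the above produces a single $\pi_1$-surjective degree-$d$ map, so $d\in\c{D}_{surj}(M_1\#\cdots\#M_k, N_1\#\cdots\#N_k)$.

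The main obstacle I anticipate is purely bookkeeping rather than conceptual: ensuring that the boundary parametrizations of the removed balls match up so that the pieces glue to a \emph{continuous} (indeed, nice enough to compute degree) map, and that orientations are handled so every summand contributes degree $+d$ rather than $\pm d$. Arranging the homotopies from Lemma \ref{basic} simultaneously for all $i$ and choosing a consistent connecting pattern (a tree of tubes joining the $k$ summands) with compatible basepoint paths is the delicate part; once that normalization is in place, the degree and $\pi_1$ computations are direct applications of the additivity of degree over the summand decomposition and van Kampen's theorem.
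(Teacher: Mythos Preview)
Your approach is essentially the same as the paper's: use Lemma~\ref{basic} to make the preimage of a ball a ball, excise and glue the punctured pieces (the paper does $k=2$ and inducts, you do all $k$ at once via a tree of tubes, which is cosmetic), then read off degree and $\pi_1$-surjectivity from the free-product structure. One inaccuracy to fix: your aside that ``$0\in\c{D}_{surj}$ always'' because ``the zero map works'' is wrong---a constant map is $\pi_1$-surjective only when the target is simply connected---so the $d=0$ case is not handled (the paper tacitly works with $d\neq 0$, which is all that is needed downstream).
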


\begin{proof} Suppose first $k=2$.
Since $f_{*}$ is $\pi_1$-surjective, by Lemma \ref{basic}, we can
homotopy $f_i$ such that for some $n$-ball $D'_i \subset N_i$,
$f_i^{-1}(D'_i)$ is an $n$-ball ${D}_i \subset M_i$. Thus we get a
proper map $\bar{f_i}:\ M_i\setminus D_i \rightarrow N_i\setminus
D'_i$ of degree $d$, which also induces a degree $d$ map from
$\partial D_i$ to $\partial D'_i$. Since maps of the same degree
between $(n-1)$-spheres are homotopic, so after proper homotopy, we
can paste $\bar{f}_1$ and $\bar{f}_2$ along the boundary to get map
$f=f_1\# f_2: M_1\# M_2\to N_1\# N_2$ of degree $d$. Moreover
$f_{*}=f_{1*}* f_{2*}: \pi_1{M_1}*\pi_1{M_2}\to
\pi_1{N_1}*\pi_1{N_2}$ is surjective since each $f_{i*}:
\pi_i{M_i}\to \pi_1{N_i}$ is surjective. Also clearly
$$\c{D}_{surj}(M_1\#M_2, N_1\#N_2)\supset \c{D}_{surj}(M_1, N_1)\cap
\c{D}_{surj}(M_k, N_2).$$ Then the proof of the Lemma is finished by
induction.
\end{proof}

Suppose $N=N_1\#...\#N_k$ subjects the condition in Theorem
\ref{$D(M,N)$}. To apply Lemma \ref{supset} to prove Theorem
\ref{$D(M,N)$}, for each $N_i$, we need to find a 3-manifold $M_i$
so that $\cap_{i=1}^k \c{D}_{surj}(M_i, N_i)$ is an infinite set.
The next lemma provides a uniform and the simplest way to construct
such $M_i$.

\begin{lemma}\label{n+1} Let $M$ be a closed oriented  manifold.
Suppose $M$ has a self-map  of degree $n$, i.e., $n\in \c{D}(M)$.
Then there is a $\pi_1$-surjective map $g: M\# M\to M$ of degree
$n+1$, i.e., $n+1\in \c{D}_{surj}(M\#M, M)$.
\end{lemma}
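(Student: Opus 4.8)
The plan is to construct the degree-$(n+1)$ map $g\colon M\# M\to M$ directly from the given self-map and a degree-one collapse, and then adjust it to be $\pi_1$-surjective. Let $h\colon M\to M$ be a self-map with $\deg(h)=n$. The natural candidate for the second summand is a degree-one map $c\colon M\to M$; the obvious choice is the identity, but we must be careful about what happens on the connected-sum sphere. My first step would be to pick an $n$-ball $B\subset M$ and, by Lemma~\ref{basic} applied to $h$ (after checking $h$ can be taken $\pi_1$-surjective, or replacing it), homotope $h$ so that $h^{-1}(B)$ is a single $n$-ball $D_1\subset M$. This gives a proper degree-$n$ map $\bar h\colon M\setminus D_1\to M\setminus B$ restricting to a degree-$n$ map on the boundary spheres.

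**Next I would** handle the second copy of $M$. On it I take the identity map $\mathrm{id}\colon M\to M$, pick a small ball $D_2\subset M$ disjoint from $B$, and use it to form the connected sum: $M\# M$ is obtained by removing $D_1$ from the first copy and $D_2$ from the second and gluing along their boundary spheres. The target $M$ is left intact. The map $g$ is then defined by $\bar h$ on the first punctured copy and the identity (suitably arranged so that $\partial D_2$ maps to $\partial B$) on the second. The key numerical point is the degree count: over a generic point $p$ in the interior of $B\subset M$, the preimages under $g$ come entirely from the identity summand and contribute degree $1$, while over a generic point $q\in M\setminus B$ the preimages come from $\bar h$ and contribute degree $n$. **The main obstacle here is** verifying that these two local degrees are compatible to give a single well-defined global degree, i.e.\ that the total degree is $n+1$ rather than something that varies with the choice of regular value; this requires checking that the degrees of $\bar h$ and the collapse match up correctly across the gluing sphere so that they add.

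**To make the degrees add coherently,** I would realize $g$ as a pinch-type construction: first map $M\# M\to M\vee M$ by collapsing the connecting sphere to a point, then map $M\vee M\to M$ by sending the first wedge factor via $h$ and the second via the identity. The degree of the composite on the fundamental class is then literally $\deg(h)+\deg(\mathrm{id})=n+1$, since the pinch map sends $[M\# M]$ to $[M]+[M]\in H_n(M\vee M)$ and the wedge map realizes $n$ on the first summand and $1$ on the second. **Finally,** for $\pi_1$-surjectivity I observe that the identity summand already induces a surjection on $\pi_1$: since $\pi_1(M\# M)=\pi_1(M)*\pi_1(M)$ surjects onto $\pi_1(M)$ via the second free factor and the pinch-then-wedge construction restricts to the identity on that factor, $g_*$ is automatically surjective regardless of the behavior of $h_*$. **The remaining routine check** is that the wedge map $M\vee M\to M$ and the pinch map are genuine continuous maps with the claimed degree and $\pi_1$ behavior, which follows from standard properties of the pinch map on connected sums.
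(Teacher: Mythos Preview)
Your pinch-and-fold construction in the third paragraph is exactly the paper's proof: factor $g$ as $M\#M \xrightarrow{q} M\vee M \xrightarrow{h\vee\mathrm{id}} M\vee M \xrightarrow{u} M$, read off degree $n+1$ from $q_*[M\#M]=[M]+[M]$, and get $\pi_1$-surjectivity from the identity summand. The initial detour through Lemma~\ref{basic} is unnecessary (and would stumble if $h$ is not $\pi_1$-surjective or $n=0$), but you correctly abandon it for the cleaner argument.
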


\begin{proof} Suppose $f:M\to M$ is a map of degree $n$. Pick two
copies $M_1$ and $M_2$ of $M$ and we construct the following maps
$$M_1\# M_2 \overset q  \longrightarrow M_1\vee M_2\overset {id\vee f} \longrightarrow M_1\vee M_2\overset {u} \longrightarrow M,$$
where $q$ is the quotient map which pinches the 2-sphere defining
the connected sum $M_1\# M_2$ to the point defining the one point
union $M_1\vee M_2$,  the map ${id\vee f}$ restricted on $M_1$ is
the identity and restricted on $M_2$ is the map $f$, and the map $u$
sends both $M_1$ and $M_2$ to $M$ by orientation preserving
homeomorphisms. Let $g=u \circ (id\vee f) \circ q$. Then it is easy
to see that on top dimensional homology, $g$ sends the fundamental
class $[M_1\# M_2]$ to $(n+1)[M]$ therefore $g$ of degree $n+1$.
Furthermore on the fundamental group $g_*$ sends the free factor
$\pi_1(M_1)$ of $\pi_1(M_1\# M_2)=\pi_1(M_1)*\pi_1 (M_2)$ to
$\pi_1(M)$ isomorphically, hence $g$ is $\pi_1$-surjective.
\end{proof}

According to and suggested by Lemma \ref{n+1}, we will try to find
the infinite intersection of $\c{D}(N_i\#N_i, N_i)$, and to do this
we should first find the infinite intersection of $\c{D}(N_i)$.
Lemma \ref{D(R)} below is prepared for this purpose.

To state Lemma \ref{D(R)}, we need to slightly reorganize the prime
3-manifolds $R$ with $|\c{D}(R)|=\infty$. According to Proposition
\ref{D(N)}, such $R$ is covered by either a torus bundle, or a
trivial circle bundle, or the 3-sphere $S^3$. Call a 3-manifold $R$
a torus semi-bundle if $R$ is obtained by identifying the boundaries
of  two twisted $I$-bundle over the Klein bottle. Each torus
semi-bundle is doubly covered by a torus bundle. Each 3-manifold $R$
covered by a torus bundle must be a torus bundle or a torus
semi-bundle if $R$ supports the geometry of $E^3$ or Sol. But some
3-manifolds supporting Nil geometry are neither torus bundle nor
torus semi-bundle \cite{SWWZ}. Each $R$ supporting $H^2\times
E^1$-geometry has a unique Seifert fiberation with $n$ singular
fibers of index $a_1, ..., a_n$, and we will set
$\alpha(R)=|a_1...a_n|$ if $n>0$ and $\alpha(R)=1$ if $n=0$. Now we
divide prime 3-manifolds $R$ with $|D(R)|=\infty$ into the following
five classes

(1) $R$ supports $S^3$ geometry.

(2) $R$ supports $H^2\times E^1$ geometry.

(3) $R$ is a torus bundles or torus semi-bundle;

(4) $R$ is a Nil 3-manifold not in (3);

(5) $R=S^2\times S^1$.

\begin{lemma}\label{D(R)} Suppose $R$ is a closed oriented  prime 3-manifold such that $|\c{D}(R)|=\infty$.
Then $\c{D}(R)$ contains a infinite set of integers as below:

(1) $\c{D}(R)\supset \{l|\pi_1(R)|+1| l\in \ZZ\}$ if $R$ is covered
by $S^3$;

(2) $\c{D}(R)\supset \{l\alpha(R)+1, \, l\in \ZZ\}$ if $R$ supports
$H^2\times E^1$-geometry; (3) $\c{D}(R)\supset \{(2l+1)^2| l\in
\ZZ\}$ if $R$ is a torus bundle or a torus semi-bundle;

(4) $\c{D}(R)\supset \{(l)^4| l\equiv 1\, mod\, 12, \, l\in \ZZ\}$
if $R$ supports Nil-geometry but not in Class (3).


(5) $\c{D}(R)=\ZZ$ if $R=S^2\times S^1$.
\end{lemma}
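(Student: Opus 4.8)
The plan is to treat the five classes separately, in each case exhibiting explicit self-maps of $R$ whose degrees fill out the claimed infinite set. (For this lemma only the \emph{existence} of the self-maps is needed; $\pi_1$-surjectivity is not required here, but is supplied afterwards by Lemma \ref{n+1}, which converts a degree-$n$ self-map of $R$ into a $\pi_1$-surjective degree-$(n+1)$ map $R\#R\to R$.) Three construction mechanisms suffice: a pinch-and-cover trick for the spherical class, a vertical fibre-wrapping for the Seifert classes, and a linear fibrewise map for the torus-(semi-)bundle and Nil classes. For those $R$ whose self-mapping degree sets have by now been computed completely (see the Remark and \cite{Du,SWW,SWWZ}), each construction can alternatively be read off as a sub-progression of the known $\c{D}(R)$; I will use the explicit constructions since they are self-contained.

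\textbf{Classes (5), (1), (2): the easy ones.} For (5), a degree-$d$ self-map of $S^2$ crossed with the identity of $S^1$ is a self-map of $S^2\times S^1$ of degree $d$, so $\c{D}(S^2\times S^1)=\ZZ$. For (1), write $G=\pi_1(R)$ and let $p\co S^3\to R$ be the universal covering; as a $|G|$-fold orientation-preserving cover its mapping degree is $|G|$, so $p$ (and, after precomposing with a reflection of $S^3$, a degree $-|G|$ companion) realizes $\pm|G|$ as the degree of a map $S^3\to R$. Now pinch $R$ onto a wedge $R\vee(S^3)^{\vee|l|}$ along $|l|$ disjoint balls, send the $R$-summand by the identity and each sphere by a degree $\pm|G|$ map; the resulting self-map has degree $1+l|G|$. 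For (2), $R$ is Seifert fibred over a hyperbolic base orbifold with Euler number $0$; take the map that is the identity on the base and wraps the $S^1$-fibre $d$ times. Over a singular fibre of order $a_i$ this extends precisely when $d\equiv 1\pmod{a_i}$, and since $\alpha(R)=|a_1\cdots a_n|$ we obtain a genuine self-map of degree $d$ for every $d\equiv 1\pmod{\alpha(R)}$, i.e. for every $d=l\alpha(R)+1$.

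\textbf{Classes (3), (4): the arithmetic ones.} For (3), realize a self-map by a fibrewise linear map: on the fibre $T^2$ use the scalar $mI$ (degree $m^2$) while covering the base by degree $1$. Since $mI$ is central it commutes with the bundle monodromy, so in the torus-bundle case this already gives degree $m^2$ for every $m$. In the semi-bundle case $R$ is assembled from twisted $I$-bundles over the Klein bottle, whose defining involution of $T^2$ has the form $v\mapsto Jv+t$ with $J=\mathrm{diag}(1,-1)$ and $t$ a half-lattice vector of order $2$; equivariance of $mI$ forces $(m-1)t\equiv 0$ modulo the lattice, hence $m$ odd. Taking $m=2l+1$ gives degree $(2l+1)^2$ in both cases. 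For (4), $R$ is a Nil-manifold, Seifert fibred over a Euclidean base orbifold with cone orders among $\{2,3,4,6\}$ and with nonzero Euler number $e$. The nonzero Euler number forces the fibre degree to equal the base degree under any fibre-preserving nonzero-degree self-map, so the total degree is (base degree)$^2$. Realize the base self-map by multiplication by an integer $l$ on the flat orbifold: this has degree $l^2$ and preserves every cone point provided $l\equiv 1$ modulo each cone order, i.e. $l\equiv 1\pmod{12}$. The matching fibre degree is then $l^2$ as well, and the total degree is $l^2\cdot l^2=l^4$.

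The main obstacle is not the bookkeeping of degrees but the verification that these candidate maps genuinely exist as maps of the closed manifold $R$: extending the vertical or fibrewise map over each singular fibre, matching it across the gluing tori, and making it equivariant for the Klein-bottle and orbifold involutions. It is exactly these local compatibility conditions that produce the arithmetic shape of the answer — the residue $1\pmod{a_i}$ in (2), the parity of $m$ (odd squares) in (3), and the congruence $l\equiv 1\pmod{12}$ together with the doubling of a square to a fourth power in (4). One also has to invoke the transformation law for the Seifert Euler number to conclude fibre degree $=$ base degree in the nonzero-Euler-number case (4); this is the one input I would quote rather than reprove, and together with the complete determinations of $\c{D}(R)$ in \cite{Du,SWW,SWWZ} it closes all five cases.
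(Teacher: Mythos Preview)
Your proposal is correct in outcome and is considerably more explicit than the paper's own proof of this lemma, which consists entirely of citations: (5) is called obvious, (1) and (2) are attributed to ``known elementary constructions'' in \cite{W1}, \cite{Du}, \cite{SWWZ}, and (3) and (4) are simply read off from the computations of $\c{D}(R)$ in \cite{SWW} and \cite{SWWZ}. Your pinch-to-a-wedge argument for (1), the vertical fibre-wrapping for (2), and the central scalar $mI$ on the torus fibre for (3) are precisely the constructions that sit behind those references, so in these cases you are unpacking what the paper merely names; this buys the reader a self-contained account at no real cost.

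Your sketch for (4) is the one place that does not quite stand on its own. The Euler-number identity you invoke is a constraint on the degrees of \emph{existing} fibre-preserving self-maps, not a construction; having produced a degree-$l^{2}$ self-map of the base orbifold, you still owe the verification that it lifts to a self-map of the circle bundle with the forced fibre degree $l^{2}$, and that lifting step is exactly where the arithmetic of the Seifert invariants enters. (Separately, ``multiplication by $l$'' on a Euclidean orbifold with several cone points needs a choice of centre and a check that cone points go to cone points of the same order; the blanket condition $l\equiv 1\pmod{12}$ is not obviously what governs that.) Since you explicitly fall back on \cite{SWWZ} at the end, your argument closes the same way the paper's does and the lemma is established; but if you want the case-(4) sketch to be independent of that citation, the lifting step is the missing piece.
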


\begin{proof} (5) is obviously.
(1) and (2) are derived from known elementary
constructions, and certainly  one  can also  find (1) in \cite{W1}
\cite{Du} and  \cite{SWWZ} and  (2) in \cite{W1} and \cite{SWWZ}.

(3) is derived from  Theorem 1.6 and Theorem 1.7 of \cite{SWW}, and
(4) is derived from Theorem 1.4 of \cite{SWWZ}.
\end{proof}

We are going to prove  Theorem \ref{$D(M,N)$}. Suppose $N$ is  a
closed oriented  3-manifold and $|\c{D}(R)|=\infty$ for each prime
factor $R$ of $N$. By the discussion before Lemma \ref{D(R)}, we
have

$$N=
 (\#_{i=1}^a P_i)\# (\#_{j=1}^b Q_j)\# (\#_{k=1}^c
U_k)\# (\#_{m=1}^d V_m){\#(\#_{p=1}^e S^2\times S^1)},
$$ where $P_i$,
$Q_j$, $U_k$ and $V_m$ are 3-manifolds of types in (1), (2), (3) and
(4) respectively, and $a, b, c, d, e$ are integers $\ge 0$.
\begin{theorem}\label{$D(M,N)$*}
 Let $$d(N,l)=
(12 \prod_{i=1}^a|\pi(P_i)|\prod_{j=1}^{b}\alpha(Q_j)l+1)^4,\,\,\,
l\in \ZZ.$$

 Then $d(N,l)+1\in \c{D}_{surj}(N\# N, N)$ for each
$l\in\ZZ$.
\end{theorem}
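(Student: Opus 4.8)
The plan is to reduce the statement, via the two connected-sum lemmas, to a purely number-theoretic check: that the single integer $d(N,l)$ lies in $\c{D}(R)$ for every prime factor $R$ of $N$ simultaneously, and then to read that check off from Lemma \ref{D(R)}.

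First I would rewrite $N\# N$. Since the oriented connected sum of closed oriented $3$-manifolds is commutative and associative up to orientation-preserving homeomorphism, grouping the two copies of each prime factor gives
$$N\# N\ \cong\ \#_{R}\,(R\# R),$$
where $R$ ranges over the prime factors $P_i,Q_j,U_k,V_m,S^2\times S^1$ of $N$. By Lemma \ref{supset} it therefore suffices to produce a single degree $D=d(N,l)+1$ lying in $\c{D}_{surj}(R\# R,R)$ for every prime factor $R$, for then $D\in\bigcap_R\c{D}_{surj}(R\# R,R)\subset\c{D}_{surj}(N\# N,N)$. By Lemma \ref{n+1}, the condition $D=n+1\in\c{D}_{surj}(R\# R,R)$ holds as soon as $n=d(N,l)\in\c{D}(R)$. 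Thus the whole theorem collapses to the single claim that $d(N,l)\in\bigcap_R\c{D}(R)$.

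The heart of the argument is then to verify this claim using the explicit inclusions of Lemma \ref{D(R)}. Set $t=12\prod_{i=1}^a|\pi_1(P_i)|\prod_{j=1}^b\alpha(Q_j)\,l+1$, so that $d(N,l)=t^4$, and observe at once that $t\equiv 1\pmod{12}$, that $t$ is odd, and that $t\equiv 1$ modulo each $|\pi_1(P_i)|$ and each $\alpha(Q_j)$, because every such factor divides $t-1$. I would then check the five types in turn. For type (4) the number $d(N,l)=t^4$ is a fourth power with $t\equiv 1\pmod{12}$, landing in the set of Lemma \ref{D(R)}(4). For type (3), since $t$ is odd so is $t^2$, whence $d(N,l)=(t^2)^2$ is an odd square as required by (3). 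For types (1) and (2), $t\equiv 1$ modulo $|\pi_1(P_i)|$ (resp. $\alpha(Q_j)$) forces $t^4\equiv 1$ there too, so $d(N,l)$ has the form demanded by (1) (resp. (2)). Type (5) is unconstrained since $\c{D}(S^2\times S^1)=\ZZ$. Hence $d(N,l)\in\c{D}(R)$ for each $R$, completing the reduction.

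The only genuine obstacle is exactly this simultaneous membership: the conditions coming from the different geometries are of two incompatible-looking flavours — congruence conditions $n\equiv 1$ modulo the various $|\pi_1(P_i)|$ and $\alpha(Q_j)$ from types (1) and (2), versus \emph{perfect-power} conditions, namely an odd square for (3) and a fourth power with prescribed residue for (4). The design of $d(N,l)$ is what reconciles them: taking the fourth power $t^4$ simultaneously produces a fourth power, handling (4), and an odd square, handling (3), while building all of the $12$, the $|\pi_1(P_i)|$ and the $\alpha(Q_j)$ into the base $t$ guarantees $t\equiv 1$, hence $t^4\equiv 1$, modulo each relevant modulus at once. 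Finally, since distinct $l$ yield distinct values $d(N,l)+1$, the set $\{d(N,l)+1:l\in\ZZ\}$ is infinite, so $M=N\# N$ witnesses $|\c{D}(M,N)|=\infty$, which is how this theorem yields Theorem \ref{$D(M,N)$}.
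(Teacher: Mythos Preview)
Your proposal is correct and follows essentially the same route as the paper: reduce via Lemma \ref{supset} and Lemma \ref{n+1} to checking $d(N,l)\in\c{D}(R)$ for every prime factor $R$, and then verify this case-by-case against Lemma \ref{D(R)} using the arithmetic of $t=12\prod_i|\pi_1(P_i)|\prod_j\alpha(Q_j)\,l+1$. Your verification is in fact spelled out a bit more carefully than the paper's (which simply asserts the four presentations $C_1|\pi_1(P_i)|+1=C_2\alpha(Q_j)+1=(2C_3+1)^2=(12C_4+1)^4$), but the content is identical.
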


\begin{proof} It is easy to present $d(N,l)$ in the following four forms
$$d(N,l)= C_1|\pi_1(P_i)|+1
=C_2|\alpha(Q_j)|+1=(2C_3+1)^2=(12C_4+1)^4$$ for some integers $C_1,
C_2, C_3, C_4$.

Comparing those four forms with (1), (2), (3), (4) of Lemma
\ref{D(R)} respectively, we have that $d(N,l)\in D(R)$ for each
prime factor $R$ in $N$.

By Lemma \ref{n+1}, we have that $d(N,l)+1\in \c{D}_{surj}(R\# R,
R)$ for each prime factor $R$ in $N$ and each $l\in\ZZ$.

Notice that
$$
 (\#_{i=1}^a P_i\#P_i)\# (\#_{j=1}^b Q_j\#Q_j)\# (\#_{k=1}^c
U_k\#U_k)\# (\#_{m=1}^d V_m\#V_m)\#(\#_{p=1}^{2e} S^2\times S^1
)=N\#N.$$

By Lemma \ref{supset}, we have that $d(N,l)+1\in \c{D}_{surj}(N\# N,
N)$ for each $l\in\ZZ$.

This finishes the proof of Theorem \ref{$D(M,N)$*}.
\end{proof}

Therefore we finish the proof of  Theorem \ref{$D(M,N)$}.

\vskip 0.4 truecm

\textbf{Acknowledgement.} The first two authors thank the
Mathematics Department of Peking University for support and
hospitality during their visit in summer of 2010 to work on this
paper. The third author is partially supported by grant No.10631060
of the National Natural Science Foundation of China and Ph.D. grant
No. 5171042-055 of the Ministry of Education of China.

\end{document}